\newcommand{\C}{\mathds{C}}
\newcommand{\R}{\mathds{R}}
\newcommand{\Q}{\mathds{Q}}
\newcommand{\PP}{\mathds{P}}
    \DeclareFontFamily{U}{wncy}{}
    \DeclareFontShape{U}{wncy}{m}{n}{<->wncyr10}{}
    \DeclareSymbolFont{mcy}{U}{wncy}{m}{n}
    \DeclareMathSymbol{\Sha}{\mathord}{mcy}{"58}
\newcommand{\ord}{\textnormal{ord}}
\definecolor{myorange}{rgb}{0.9, 0.55, 0.3}
\definecolor{mygreen}{rgb}{0.35, 0.71, 0.0}
\definecolor{mybrown}{rgb}{0.63, 0.32, 0.18}
\newtheorem{theorem}{Theorem}[section]
\newtheorem{lemma}[theorem]{Lemma}
\newtheorem{conjecture}[theorem]{Conjecture}
\theoremstyle{definition}
\newtheorem{definition}[theorem]{Definition}
\theoremstyle{definition}
\theoremstyle{definition}
\theoremstyle{definition}
\newtheorem{remark}[theorem]{Remark}
\title[Elliptic Lehmer Investigations]{Experimental investigations on Lehmer's conjecture for elliptic curves}
\author[]{Sven Cats}
\address{Centre for Mathematical Sciences, University of Cambridge, Cambridge, UK}
\email{sc2173@cam.ac.uk}
\author[]{John Michael Clark}
\address{Department of Mathematics, University of Texas at Austin, Austin, Texas, USA}
\email{john.m.clark@utexas.edu}
\author[]{Charlotte Dombrowsky}
\address{Mathematical Institute, Leiden University, Leiden, The Netherlands}
\email{c.k.l.dombrowsky@math.leidenuniv.nl}
\author[]{Mar Curc\'o Iranzo}
\address{Mathematical Institute, Utrecht University, Utrecht, The Netherlands}
\email{m.curcoiranzo@uu.nl}
\author[]{Krystal Maughan}
\address{Department of Computer Science, University of Vermont, Burlington, Vermont, USA}
\email{Krystal.Maughan@uvm.edu}
\author[]{Eli Orvis}
\address{Department of Mathematics, University of Colorado at Boulder, Boulder, Colorado, USA}
\email{eli.orvis@colorado.edu}
\subjclass[2020]{Primary: 11G05; Secondary: 14G40}
\begin{document}

\begin{abstract}
In this short note, we give a method for computing a non-torsion point of smallest canonical height on a given elliptic curve $E/ \Q$ over all number fields of a fixed degree. We then describe data collected using this method, and investigate related conjectures of Lehmer and Lang using these data.
\end{abstract}

\maketitle

\setcounter{page}{1}
\section{Introduction}
Let $E$ be an elliptic curve over a number field $K$. We denote by $\overline{K}$ a fixed algebraic closure of $K$ and by $\hat{h}$ the canonical height function on $E(\overline{K})$.
Recall that $\hat{h}(P) = 0$ if and only if $P$ is a torsion point. There is much interest in studying the canonical heights of non-torsion points. In particular, we have the following conjecture, which is known as Lehmer's conjecture because of its analogy with a conjecture of D.H. Lehmer from 1933 \cite{Leh}. It describes how the smallest possible height of a non-torsion point $P \in E(\overline{K})$ varies with the (minimal) field $K(P)$ over which $P$ is defined.

\begin{conjecture}[Lehmer]\label{conj:Lehmer}
    Let 
    \[C_E \coloneqq \inf
    \left\{ \hat{h}(P) \cdot [K(P) : K]\right\},\] 
    where the infimum ranges over the non-torsion points $P \in E(\overline{K}) - E(\overline{K})_{\textnormal{tors}}$. Then the constant $C_E$ satisfies $C_E>0$.
\end{conjecture}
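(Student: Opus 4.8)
\emph{Status.} Conjecture~\ref{conj:Lehmer} is a well-known open problem; it is the elliptic analogue of the classical Lehmer problem on Mahler measures (equivalently, on the Weil heights of points of $\mathbb{G}_m$), which is itself unsolved. What follows is therefore not a proof but a description of the strategy behind the strongest known partial results, and of the point at which it stalls.

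\emph{Reductions.} Enlarging $K$ by a finite extension changes $C_E$ only by a bounded factor and does not affect whether $C_E>0$, so $K$ may be taken as large as convenient, for instance large enough that $E$ is semistable. The problem then becomes a lower bound $\hat h(P)\ge c(E)\,[K(P):K]^{-1}$ for non-torsion $P\in E(\overline K)$. Decomposing $\hat h(P)$ as a sum of local canonical heights $\hat\lambda_w(P)$ over the places $w$ of $K(P)$, weighted by local degrees, reduces matters to ruling out that all of these local contributions are simultaneously tiny: the archimedean terms are controlled by the fundamental domain of the period lattice of $E$, whereas the terms at places of bad reduction are governed by the component group and by $\ord_w(\Delta_E)$. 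It is the latter that causes the known bounds to deteriorate with the Szpiro ratio $\sigma_E=\log|\Delta_E|/\log N_E$ (Hindry--Silverman), so that a proof of Szpiro's conjecture (equivalently \textsc{abc}) would immediately improve them.

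\emph{The transcendence method.} To reach the conjectured exponent of $[K(P):K]$ one would run the elliptic transcendence machinery (Masser, Laurent, David, W\"ustholz): assume for contradiction that $\hat h(P)$ is smaller than a suitable constant times $[K(P):K]^{-1}$, so that the first several multiples $[1]P,[2]P,\dots$ of $P$ still have small height; construct, by a Siegel-lemma argument, an auxiliary section of a line bundle on a power of $E$ vanishing to high order at the resulting points; estimate it analytically using the smallness of the heights; and play this off against a zero estimate (Philippon, Masser--W\"ustholz) to extract a Liouville-type lower bound that contradicts the analytic one. An extrapolation step --- iterating the multiplication maps $[\ell]$, or, when $E$ has complex multiplication, the CM endomorphisms --- is what would close the gap between a merely polynomial-in-degree bound and the conjectured linear one.

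\emph{The main obstacle.} For curves with complex multiplication, the extra endomorphisms supply the analogue of the Frobenius input in Dobrowolski's theorem on the classical Lehmer problem, and the conjecture is then known up to a power of $\log[K(P):K]$ (Laurent). For curves \emph{without} CM there is no such symmetry, and --- exactly as for the still-open classical problem over $\mathbb{G}_m$ --- the best unconditional bounds remain polynomial in the degree (Masser). Obtaining the linear lower bound unconditionally in the non-CM case is the crux, and it appears to require a genuinely new ingredient rather than an optimisation of the scheme above; the experiments of this paper are meant to probe, rather than to circumvent, this difficulty.
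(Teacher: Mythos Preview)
Your assessment is correct: Conjecture~\ref{conj:Lehmer} is stated in the paper \emph{as a conjecture}, and the paper offers no proof of it. There is therefore nothing to compare your write-up against; the paper simply records the statement and uses it as motivation for the experimental work that follows.

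Your survey of the partial results (the CM case handled up to logarithmic factors by Laurent, the polynomial-in-degree bounds of Masser in the non-CM case, and the Hindry--Silverman link to the Szpiro ratio) is accurate and goes well beyond what the paper itself says --- the paper only remarks in passing that ``there is theoretical progress on these conjectures'' without citing or describing any of it. As commentary on an open problem your text is appropriate; just be aware that it is background exposition rather than a proof proposal, which is exactly the right response here.
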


The other primary conjecture describes how the smallest possible height of a non-torsion point $P \in E(\overline{K})$ defined over an extension of a given degree varies with the curve $E$. Denote by $j_E,\Delta_E$ the $j$-invariant and minimal discriminant of $E/K$. We write $N_{K/\Q} : K \rightarrow \Q$ for the norm map and see Definition \ref{def:heights} for the height function $h : \PP^1(\overline{K}) \rightarrow \R_{\geq 0}$. Consider the quantity $M_E = \max\{h(j_E),\log |N_{K/\Q} \Delta_E|,1\}$.

\begin{conjecture}[Lang]\label{conj:Lang}
    Let
    \[C_{K,d}\coloneqq \inf \left\{ \frac{\hat{h}(P)}{M_{E'}} \right\}, \]
    where the infimum ranges over all elliptic curves $E'/K$ and the non-torsion points $P \in E'(\overline{K}) - E'(\overline{K})_{\textnormal{tors}}$ for which $K(P)$ is contained in a degree $d$ extension of $K$. Then the constant $C_{K,d}$ satisfies $C_{K,d}>0$.
\end{conjecture}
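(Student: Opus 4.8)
The plan is to attack Conjecture~\ref{conj:Lang} by decomposing the canonical height into local contributions and reducing the desired lower bound to a Szpiro-type inequality, following the strategy of Hindry--Silverman. Fix $K$ and $d$, let $E'/K$ be an elliptic curve, and let $P \in E'(L) - E'(L)_{\textnormal{tors}}$ with $[L:K] \le d$; note that for $K = \Q$ and $d=1$ this is exactly Lang's original height conjecture. Writing $\hat{h}(P) = \sum_{w} \hat{\lambda}_w(P)$ as a suitably normalised sum of N\'eron local heights over the places $w$ of $L$, the archimedean terms and the terms at places of good reduction are bounded below by a controlled multiple of the corresponding local contribution to $M_{E'}$; the difficulty is concentrated at the finitely many places of bad reduction, where $\hat\lambda_w(P)$ can be negative.

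The key step is therefore to bound $\sum_{w \mid \p} \hat\lambda_w(P)$ from below at a prime $\p$ of bad reduction. For multiplicative reduction one uses the explicit formula for the local height on a Tate curve: $\hat\lambda_w(P)$ is, up to a bounded error, a quadratic (Bernoulli) function of the index of the component of the special fibre hit by $P$, with minimum on the order of $-\ord_{\p}(\Delta_{E'}) \log(\# k_\p)$; additive places contribute a bounded amount after a base change of degree dividing $12$. Summing over all bad primes yields a lower bound of the shape $\hat h(P) \gg_{K,d} \log|N_{K/\Q}\Delta_{E'}| - (\text{defect})$, where the defect measures the discrepancy between $\log|N_{K/\Q}\Delta_{E'}|$ and $\log|N_{K/\Q}\mathfrak{f}_{E'}|$. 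Controlling this defect by a constant multiple of $\log|N_{K/\Q}\mathfrak{f}_{E'}|$, which is itself $O(M_{E'})$, is exactly the content of Szpiro's conjecture over $K$.

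I expect this last point to be the main obstacle: an unconditional proof of Conjecture~\ref{conj:Lang} in full generality seems out of reach, since it would imply nontrivial cases of Szpiro's conjecture (equivalently $abc$). What one can realistically prove is the conditional implication ``Szpiro over $K$ $\Rightarrow$ Conjecture~\ref{conj:Lang}'', with $C_{K,d}$ made explicit in terms of the Szpiro constant, $[K:\Q]$, and $d$, together with unconditional results for restricted families --- most notably elliptic curves with everywhere potentially good reduction, where the defect vanishes and the argument above closes, recovering Hindry--Silverman. A secondary technical point is making the dependence on $d$ explicit: since $P, 2P, \dots$ may all land in low-index components at the bad primes, one must combine the local estimates with a pigeonhole argument over small multiples of $P$ (and, if one wants the conjecturally polynomial rate in $d$, with Dobrowolski-type input), which is where keeping careful track of $L/K$ becomes delicate.
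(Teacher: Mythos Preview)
There is no proof in the paper to compare against: Conjecture~\ref{conj:Lang} is stated as an open conjecture, not as a theorem, and the paper makes no attempt to prove it. The paper's contribution regarding this statement is purely experimental --- it computes small-height points on elliptic curves over quadratic fields and uses the resulting data to probe the constants $C_E$ and $C_{K,d}$ numerically (see the abstract, the introduction, and Section~3). So the premise of a ``proof proposal'' is mismatched with what the paper actually does.

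That said, your sketch is a fair summary of the Hindry--Silverman circle of ideas, and you correctly identify the essential obstruction: bounding the negative local height contributions at primes of bad reduction leads directly to a Szpiro-type inequality, so an unconditional proof would entail nontrivial cases of Szpiro/$abc$. Your own caveat that ``an unconditional proof of Conjecture~\ref{conj:Lang} in full generality seems out of reach'' is exactly right, and indeed this is why the paper treats the statement as a conjecture rather than a result. If your goal were to contribute theoretically here, the honest deliverable is the conditional implication (Szpiro $\Rightarrow$ Lang) with explicit constants, which is already in the literature; the paper instead takes the orthogonal, computational route.
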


Although there is theoretical progress on these conjectures
and their generalisations to abelian varieties over number fields, very little experimental work has been done investigating the values of $C_E$ and $C_{K,d}$. 
In this short paper, we describe a database of quadratic points of small height on 17,834 elliptic curves over the rationals $K=\Q$. In 728 of the cases, the point in the database is \emph{provably} the point of smallest height on the given elliptic curve over \emph{any} quadratic field. The computations to collect our data required just over 800 hours of CPU time. We use these data to investigate the constants in Conjectures \ref{conj:Lehmer} and \ref{conj:Lang}.

We proceed first with a brief background on heights, followed by a description of the theoretical results underlying the algorithm used to build our database. We then discuss some preliminary observations about the resulting data, and possible future work.


\section{Computing minimal heights over field extensions}\label{sec:TheoreticResults}
Let $K$ be a number field with fixed algebraic closure $\overline{K}$, and let $E$ be an elliptic curve over $K$, given by an affine Weierstrass equation with coefficients in $K$.

\begin{definition}\label{def:heights}
    Let $x : E(\overline{K}) \rightarrow \PP^1(\overline{K})$ denote the map taking the $x$-coordinate and $h : \PP^1(\overline{K}) \rightarrow \R_{\geq 0}$ the \emph{(absolute logarithmic) Weil height} on $\PP^1(\overline{K})$, as defined in \cite{Hindry-Silverman-Diophantine-Geometry}, Section B.2. By a standard abuse of notation, we also denote by $h : E(\overline{K}) \rightarrow \R_{\geq 0}$ the map defined by $P \mapsto h(x(P))$. \\
    We denote the \emph{canonical height} on $E/K$ by
    \begin{align*}
        \hat{h} : E(\overline{K}) &\rightarrow \R_{\geq 0}, \quad
        P \mapsto \lim_{n \rightarrow \infty} \frac{1}{4^n} h(2^nP).
    \end{align*}

\end{definition}

Recall that the canonical height is the unique quadratic form $E(\overline{K}) \rightarrow \R_{\geq 0}$ with the property that the function $P \mapsto |h(P) - \hat{h}(P)|$ is bounded.

Let $E$ be an elliptic curve over a number field $K$ and let $\mathscr{F}$ be a set of finite field extensions of $K$ with the following properties: 
\begin{itemize}
    \item If $F \in \mathscr{F}$ and $F' \subset F$, then $F' \in \mathscr{F}$.
    \item The set of degrees $\{[F:K] : F \in \mathscr{F}\}$ is finite.
\end{itemize}
Consider the infimum
\[C_{E,\mathscr{F}} := \inf_{F \in \mathscr{F}, P \in E(F) - E(F)_{\mathrm{tors}}} \left\{ \hat{h}(P)\cdot [F : K]\right\}.\]
\begin{remark}
    The first property ensures that whenever $F \in \mathscr{F}$ and $P \in E(F)$, the set $\mathscr{F}$ also contains the minimal field of definition $K(P)$ of $P$. The second property ensures that the subset of number fields in $\mathscr{F}$ of discriminant bounded by a given value is finite. In turn, using (for example) Lemma \ref{lem:large disc means large height} below, this implies that a Northcott property holds for all fields in $\mathscr{F}$: There are finitely many points of bounded height on $E$ over fields in $\mathscr{F}$. Thus the minimum height of such points exists and it follows that $C_{E,\mathscr{F}}>0$. The fact that $C_{E,\mathscr{F}}>0$ also follows directly from Theorem \ref{thm:finiteset} and it is predicted by Conjecture \ref{conj:Lehmer} since $C_{E,\mathscr{F}} \geq C_E$.
\end{remark}

 In this section we explain how to explicitly compute $C_{E,\mathscr{F}}$ using a lower bound on the Weil height $h(P)$ of the $x$-coordinate, and an upper bound on the difference $\left|h(P) - \hat{h}(P)\right|$ with the canonical height.
 
We proceed in two steps: First we determine a finite set $\mathscr{F}' \subset \mathscr{F}$ such that $C_{E,\mathscr{F}'} = C_{E,\mathscr{F}}$.
Then we explain how to solve the finite problem of determining $C_{E,\mathscr{F}'}$. Computational challenges arise when $\mathscr{F}'$ is large; we discuss these in the next section, where we consider the case $K = \Q$ and $\mathscr{F} = \{F/\Q : [F:\Q] \leq 2\}$.

As noted under Definition \ref{def:heights}, we can fix $B_E \in \R_{>0}$ such that 
\begin{equation}\label{bound between Weil ht and canonical ht}
    \left|h(P) - \hat{h}(P)\right| \leq B_E
\end{equation}
for all $P \in \cup_{F \in \mathscr{F}} E(F)$, see for example \cite{silverman1990difference} for an explicit value of $B_E$. For now, any $B_E$ satisfying (\ref{bound between Weil ht and canonical ht}) will do, but for our explicit computations it is useful to have $B_E$ as small as possible. We will use a modified version of the bound given in \cite{cremona2006height}, which we describe in Section \ref{subsec:ModifiedCPSBound2}.

\begin{lemma}\label{lem:large disc means large height}
    Let $D \in \R_{\geq 0}$, $F \in \mathscr{F}$, and $d=[F:K]$. Let $\delta_K$ be the number of Archimedean places of $K$. 
    Define $\Delta(D,E,F) \in \R_{>0}$ by 
    \[\Delta(D,E,F) := \exp\left( d\delta_K \log d + d(2d-2)B_E + (2d-2)D \right).\]
    If the discriminant $\Delta_F$ of $F$ satisfies
    $|\Delta_F| \geq \Delta(D,E,F)$,
    then
    $\hat{h}(P) \geq \frac{D}{d}$
    for all $P \in E(F)-E(F)_{\textnormal{tors}}$ satisfying $K(P) = F$. Further, if $[F:K] = [F' : K]$, then $\Delta(D, E, F) = \Delta(D, E, F')$.
\end{lemma}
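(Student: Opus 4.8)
The plan is to argue by contraposition. Fix $F \in \mathscr{F}$ with $d = [F:K]$, and suppose $P \in E(F) - E(F)_{\mathrm{tors}}$ satisfies $K(P) = F$ and $\hat{h}(P) < D/d$; I will show this forces $|\Delta_F| < \Delta(D,E,F)$, which is the contrapositive of the assertion. The first step is a routine height comparison: using the abuse of notation $h(P) = h(x(P))$ from Definition \ref{def:heights} and the bound (\ref{bound between Weil ht and canonical ht}),
\[ h(x(P)) \;\le\; \hat{h}(P) + B_E \;<\; \frac{D}{d} + B_E . \]
So the $x$-coordinate of $P$ is an algebraic number of controlled Weil height which, together with the coordinate $y(P)$ (at most quadratic over $K(x(P))$ by the Weierstrass equation of $E$), generates $F$.

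The heart of the proof is a Mahler-type inequality bounding the discriminant of a number field by the height of a generator: if $M = \Q(\alpha)$ has degree $n$ over $\Q$, then
\[ \log|\Delta_M| \;\le\; n\log n + n(2n-2)\, h(\alpha), \]
which follows by combining the divisibility of $\Delta_M$ by the discriminant of a minimal polynomial of (an integral multiple of) $\alpha$, Mahler's inequality $|\mathrm{disc}(f)| \le n^n \mathrm{M}(f)^{2n-2}$ for the Mahler measure $\mathrm{M}$, and the identity $\log \mathrm{M}(f_\alpha) = n\,h(\alpha)$ for the primitive minimal polynomial $f_\alpha$. I would apply this with $\alpha = x(P)$, together with its relative version over $K$ — working one Archimedean place of $K$ at a time, which is where the factor $\delta_K$ enters — and the tower formula for discriminants along $K \subseteq K(x(P)) \subseteq F$, using $[K(x(P)):K] \ge d/2$ and a bound for the relative discriminant of the degree-$\le 2$ extension $F/K(x(P))$. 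The target estimate is
\[ \log|\Delta_F| \;\le\; d\,\delta_K\log d + d(2d-2)\, h(x(P)). \]

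Granting this, I combine it with the height bound from the first step:
\[ \log|\Delta_F| \;<\; d\,\delta_K\log d + d(2d-2)\!\left(\frac{D}{d} + B_E\right) = d\,\delta_K\log d + (2d-2)D + d(2d-2)B_E = \log\Delta(D,E,F), \]
so $|\Delta_F| < \Delta(D,E,F)$, as required. The final assertion is then immediate: the right-hand side of the definition of $\Delta(D,E,F)$ involves $F$ only through $d = [F:K]$ (besides $D$, $B_E$ and $\delta_K$), so replacing $F$ by any $F'$ with $[F':K] = d$ leaves it unchanged.

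The main obstacle is making the constants in the discriminant-height estimate come out exactly as stated: one must use the sharp form of Mahler's inequality (the weaker $|\mathrm{disc}(f)| \le 2^{n(n-1)}\mathrm{M}(f)^{2n-2}$ is not good enough to produce the $d\,\delta_K\log d$ term), and one must handle with care the passage from $K(x(P))$ to $F$ when $y(P) \notin K(x(P))$ as well as the normalisation of relative discriminants and heights when $K \ne \Q$. Any residual dependence on the chosen Weierstrass model can be absorbed into $B_E$, which enters $\Delta(D,E,F)$ with the generous factor $d(2d-2)$.
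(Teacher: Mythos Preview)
Your overall plan --- argue by contraposition, pass from $\hat h$ to $h$ via (\ref{bound between Weil ht and canonical ht}), invoke a discriminant--height inequality of the shape $\log|\Delta_F|\le d\,\delta_K\log d+d(2d-2)\,h(x(P))$, and then note that $\Delta(D,E,F)$ depends on $F$ only through $d$ --- is exactly the structure of the paper's proof. The one difference is that the paper does not derive that key inequality at all: it simply quotes it as Theorem~2 of \cite{silverman1984lower}, after which the lemma follows in one line. Your Mahler-measure argument is essentially how Silverman proves that theorem, so you are reconstructing the content of the citation rather than taking a genuinely different route.

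The issues you flag at the end --- the sharp constant $n^n$ in Mahler's bound, the relative version over $K\neq\Q$ bringing in $\delta_K$, and the tower step when $y(P)\notin K(x(P))$ --- are precisely the technical substance of Silverman's proof, and your sketch does not actually carry them out. One point to be careful with: the claim that any residual model-dependent constant ``can be absorbed into $B_E$'' is not quite legitimate here, since the lemma is stated for an arbitrary $B_E$ satisfying (\ref{bound between Weil ht and canonical ht}) and that constant enters $\Delta(D,E,F)$ with a fixed coefficient; you are not free to enlarge it after the fact. For the lemma as stated, citing \cite{silverman1984lower} is both shorter and complete.
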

\begin{proof}
    By Theorem 2 in \cite{silverman1984lower} we have 
    $h(P) \geq \frac{1}{2d-2}\left( \frac{1}{d}\log |\Delta_F| - \delta_K \log d \right).$
    The first part of the lemma follows by combining $|\Delta_F| \geq \Delta(D,E,F)$ and Equation (\ref{bound between Weil ht and canonical ht}). The second part of the lemma is clear from the definition of $\Delta(D,E,F)$.
\end{proof}

We can now reduce $\mathscr{F}$ to a finite set. 

\begin{theorem} \label{thm:finiteset}
    Let $D' \in \R_{\geq 0}$ be such that $C_{E,\mathscr{F}} \leq D'$ and
\[\mathscr{F'} = \left\{ F \in \mathscr{F} : |\Delta_{F}| \leq \Delta(D' ,E,F) \right\}.\]
    Then, $\mathscr{F'}$ is finite and $C_{E,\mathscr{F}'} = C_{E,\mathscr{F}}$.
\end{theorem}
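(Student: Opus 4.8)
The plan is to prove the two assertions of Theorem~\ref{thm:finiteset} separately: first that $\mathscr{F}'$ is finite, and then that restricting to $\mathscr{F}'$ does not change the infimum.

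For finiteness, recall that by hypothesis the set of degrees $\{[F:K] : F \in \mathscr{F}\}$ is finite. By the second part of Lemma~\ref{lem:large disc means large height}, the quantity $\Delta(D',E,F)$ depends on $F$ only through $d = [F:K]$, so as $F$ ranges over $\mathscr{F}$ the bound $\Delta(D',E,F)$ takes only finitely many values; let $\Delta_{\max}$ be the largest. Then every $F \in \mathscr{F}'$ satisfies $|\Delta_F| \leq \Delta_{\max}$ and $[F:K] \leq d_{\max}$ for the largest degree $d_{\max}$ appearing. By the Hermite--Minkowski theorem there are only finitely many number fields of bounded degree and bounded discriminant, so $\mathscr{F}'$ is finite. (As the remark before the theorem notes, one could alternatively phrase this via the Northcott property, but Hermite--Minkowski is the cleanest route.)

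For the equality $C_{E,\mathscr{F}'} = C_{E,\mathscr{F}}$, one inequality is immediate: since $\mathscr{F}' \subseteq \mathscr{F}$, the infimum defining $C_{E,\mathscr{F}'}$ is over a smaller index set, hence $C_{E,\mathscr{F}'} \geq C_{E,\mathscr{F}}$. For the reverse, I would show that no field in $\mathscr{F} \setminus \mathscr{F}'$ can contribute a value smaller than $C_{E,\mathscr{F}'}$; in fact such a field contributes nothing below $D' \geq C_{E,\mathscr{F}}$. Suppose $F \in \mathscr{F} \setminus \mathscr{F}'$, so $|\Delta_F| > \Delta(D',E,F)$, and let $P \in E(F) - E(F)_{\textnormal{tors}}$. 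Set $F_0 = K(P)$, the minimal field of definition; by the first closure property of $\mathscr{F}$ we have $F_0 \in \mathscr{F}$, and $K(P) = F_0$ by construction. A point defined over $F_0$ also lies in $E(F)$, and since $F_0 \subseteq F$ one has $[F_0 : K] \le [F:K]$; I need to check that the divisibility $\Delta_{F_0} \mid \Delta_F$ (up to the usual considerations, since $F/F_0$ is an extension the conductor-discriminant relation gives $v(\Delta_{F_0}) \le v(\Delta_F)$ at every prime, so $|\Delta_{F_0}|$ divides $|\Delta_F|$... more carefully, $|\Delta_F| = |\Delta_{F_0}|^{[F:F_0]} \cdot |N_{F_0/K}(\mathfrak{d}_{F/F_0})|$ so $|\Delta_{F_0}| \le |\Delta_F|$). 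Combined with $\Delta(D',E,F_0) \le \Delta(D',E,F)$ — which holds because $[F_0:K] \le [F:K]$ and $\Delta(D',E,\cdot)$ is increasing in the degree — I want to conclude $|\Delta_{F_0}| \ge \Delta(D',E,F_0)$. Then Lemma~\ref{lem:large disc means large height} with $D = D'$ applies to $P$ over $F_0$, giving $\hat{h}(P) \ge D'/[F_0:K]$, hence $\hat{h}(P)\cdot[F_0:K] \ge D' \ge C_{E,\mathscr{F}}$. Since the infimum over $\mathscr{F}$ can equivalently be taken over minimal fields of definition $K(P)$ (each point is counted with $[K(P):K]$, the smallest degree available), contributions from $F \notin \mathscr{F}'$ are all $\ge D' \ge C_{E,\mathscr{F}}$ and so do not affect the infimum; therefore $C_{E,\mathscr{F}} = \inf$ over fields in $\mathscr{F}'$ $= C_{E,\mathscr{F}'}$.

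The step I expect to be the main obstacle — or at least the one requiring genuine care rather than bookkeeping — is the implication ``$|\Delta_F|$ large $\Rightarrow$ $|\Delta_{K(P)}|$ large enough to apply the lemma.'' The subtlety is that passing from an arbitrary $F$ containing $P$ down to the minimal field $K(P)$ could in principle shrink the discriminant below the threshold $\Delta(D',E,K(P))$, and one must use both that the discriminant can only shrink in a controlled way (via $|\Delta_{F_0}|$ divides $|\Delta_F|$ for $F_0 \subseteq F$, which is a standard tower fact about the relative different) and that the threshold $\Delta(D',E,\cdot)$ is monotone non-decreasing in the degree, so shrinking the degree also lowers the bar. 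Once these two monotonicities are lined up the argument closes; everything else is a matter of unwinding the definitions and invoking Hermite--Minkowski.
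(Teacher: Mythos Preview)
Your finiteness argument is correct and coincides with the paper's.

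For the equality $C_{E,\mathscr{F}'} = C_{E,\mathscr{F}}$, the step you yourself flag as the main obstacle does not go through. You correctly note that $|\Delta_{F_0}| \le |\Delta_F|$ (tower formula) and that $\Delta(D',E,F_0) \le \Delta(D',E,F)$ (monotonicity in the degree), but these, together with $|\Delta_F| > \Delta(D',E,F)$, do \emph{not} yield $|\Delta_{F_0}| \ge \Delta(D',E,F_0)$: you have lowered both sides of the target inequality, which says nothing about their relative order. Indeed $F_0 = K(P)$ may perfectly well lie in $\mathscr{F}'$---this happens whenever $P$ is already defined over a subfield of small discriminant---and that is not a problem to be ruled out but rather the benign case in which the pair $(F_0,P)$ is already counted in $C_{E,\mathscr{F}'}$.

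The detour through $F_0$ is in fact unnecessary, and your final sentence already contains the right argument. Once the infimum is rewritten over pairs $(F,P)$ with $F = K(P)$, any such pair with $F \notin \mathscr{F}'$ satisfies the hypothesis $K(P) = F$ of Lemma~\ref{lem:large disc means large height} \emph{by construction}, so the lemma gives $\hat{h}(P)\,[F:K] \ge D'$ directly---no passage to a smaller subfield is needed. Pairs with $F = K(P) \in \mathscr{F}'$ already appear in $C_{E,\mathscr{F}'}$. This is exactly the paper's (one-line) argument: ``the first part of Lemma~\ref{lem:large disc means large height} implies that $C_{E,\mathscr{F}'} = C_{E,\mathscr{F}}$.''
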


\begin{proof}
    By our initial assumptions on $\mathscr{F}$, the set $\{[F:K] : F \in \mathscr{F}\}$ is finite. Therefore, it follows from the second part of Lemma \ref{lem:large disc means large height} that the maximum  $\Delta = \max\{\Delta(D',E,F) : F \in \mathscr{F}\}$ exists. The set $\mathscr{F}_\Delta = \left\{ F \in \mathscr{F} : |\Delta_{F}| \leq \Delta \right\}$ is finite by the Hermite–Minkowski Theorem, and hence its subset $\mathscr{F'} \subset \mathscr{F}_\Delta$ is also finite. The first part of Lemma \ref{lem:large disc means large height} implies that $C_{E,\mathscr{F}'} = C_{E,\mathscr{F}}$.
\end{proof}

In principle, we can therefore compute $C_{E,\mathscr{F}}$ as follows: Do an initial search to find  $F'\in \mathscr{F},P' \in E(F')$ with $K(P')=F'$ such that 
$D'=\hat{h}(P')[F':K]$ is \emph{small}. Then $C_{E,\mathscr{F}} \leq D'$ and we write $\mathscr{F}' \subset \mathscr{F}$ for the associated finite set of fields from Theorem \ref{thm:finiteset}. In theory any $F',P'$ work, but in practice it is worth spending more time in the initial search, as a smaller $D'$ decreases the number of fields in $\mathscr{F}'$ to be considered later. For each $F \in \mathscr{F}'$ do a finite search to find the points $P \in E(F)$ such that 
\begin{equation}\label{quick search equation}
    h(P) \leq \frac{D'}{[F:K]} + B_E.
\end{equation}
If $\mathscr{F}'$ is \emph{not too large}\footnote{What this means exactly depends on the efficiency of the used algorithms and the available memory and computing power. See also Section 4.} and we can list it explicitly, we obtain in this way the finite list of $F,P$ satisfying (\ref{quick search equation}), among which is a number field $F_E \in \mathscr{F}$ and a $P_E \in E(F_E)-E(F_E)_{\textnormal{tors}}$ such that $C_{E,\mathscr{F}} = \hat{h}(P_E) \cdot [F_E:K]$.

\subsection{A modified CPS height bound}\label{subsec:ModifiedCPSBound2}

Let $E$ be an elliptic curve over a number field $K$ and $\mathscr{F}$ a set of extensions of $K$ such that the set of degrees $\{[F:K] : F \in \mathscr{F}\}$ is finite. In this subsection we compute a bound $B_E \in \R_{\geq 0}$ such that $h(P) - \hat{h}(P) \leq B_E$ for all $P \in E(F)$ as $F$ ranges over $\mathscr{F}$. As mentioned, there is previous work (for example Silverman \cite{silverman1990difference} and Bruin \cite{Bruin13}) computes a bound on $h(P) - \hat{h}(P)$ for all $P \in E(\overline{K})$, but in practice a smaller bound is desirable. For a given $F \in \mathscr{F}$, Cremona, Prickett and Siksek describe a bound $B_{E,F}$ for $h(P)-\hat{h}(P)$ for all $P \in E(F)$ in Theorem 1 of \cite{cremona2006height}, which is small enough for our purposes, and we now explain how to modify it to work for all $F \in \mathscr{F}$ at once.

Indeed, for $F \in \mathscr{F}$, the bound $B_{E,F}$ is of the form
\[B_{E,F} = \frac{1}{[F:K]} \sum_v M_v,\]
where the sum ranges over the set of archimedean places of $F$ and the set of primes of $F$ for which $E/F_v$ has bad reduction. The $M_v$ are certain local invariants associated to $E/F_v$. Since the set of degrees $\{[F:K] : F \in \mathscr{F}\}$ is finite, and since there are only finitely many extensions of $\R$ and $\Q_p$ of any given degree, the set $\{ B_{E,F} : F \in \mathscr{F} \}$ attains its maximum and we can set $B_E = \max\{ B_{E,F} : F \in \mathscr{F} \}$.

\begin{remark}
    We have implemented the above procedure for computing $B_E$ in the case $K = \Q$ and $\mathscr{F} = \{F/\Q : [F:\Q] \leq 2\}$.
\end{remark}

\section{Computational Results}

We implemented the strategy in Section \ref{sec:TheoreticResults} in the case of quadratic fields using \textsc{Magma} version 21.2-2\cite{MR1484478} and Sagemath version 10.6 \cite{sagemath}. In particular, for every elliptic curve in the Cremona database \cite{lmfdb} of conductor at most 3,000, we conducted an initial search to find points of small height. We then calculated a bound $\Delta=\Delta(D', E, F)$ as in Lemma \ref{lem:large disc means large height}, using the height bound in Section \ref{subsec:ModifiedCPSBound2} as our $B_E$, as well as a bound $B$ on the logarithmic height of the points that needed to be searched as in Equation \eqref{quick search equation}. For curves for which $\Delta < 10^5$ and $B < 50$, we searched all possible $x$-coordinates to obtain provably the smallest point, using the Sagemath implementation of \cite{DoyleKrumm}. In order to keep the computations feasible, for curves where either bound exceeded the numbers described above, we searched only over quadratic fields with $|\Delta_K| \leq 1,000.$  We believe this choice is sufficient as in the provable cases the point of smallest height was usually found lying in a field with small discriminant. The resulting datasets and the code used to produce them are available at \href{https://github.com/EliOrvis/LehmersConjectureForECs}{https://github.com/EliOrvis/LehmersConjectureForECs}. The datasets contain the following fields: 
\begin{itemize}
    \item the Cremona label for the curve; 
    \item the discriminant of the quadratic field over which the point of smallest height over all quadratic fields is defined;
    \item the coordinates of the point of smallest height over all quadratic fields;
    \item the height of this point.
\end{itemize}

\begin{remark} In view of the abundance of data in the LMFDB on generators of the Mordell-Weil group of the elliptic curves $E$ in our database and of their quadratic twists, it is not necessary to conduct an initial point search to compute the first bound $\Delta$ as these can be found in the LMFDB. Similarly, one could use the LMFDB precomputed rational points defined on $E$ itself or one of its quadratic twists to perform the search for points over quadratic fields. Implementing these changes could improve our algorithm. We thank an anonymous referee for this suggestion. \end{remark} 

\subsection{Description of data} \label{subsect:datadescription}

We ran an initial search on 17,834 elliptic curves, which required just over 800 hours of CPU time running on a server operating Red Hat Enterprise Linux 8.10. We were able to then verify that we found the point of smallest height over all quadratic fields for 86 of these curves. For 542 curves, the initial search failed to find a point, and so there is no point for these curves in our dataset. Among the remaining curves, the first
curves in our list (ordered by conductor) for which the discriminant bound obtained by the initial search was too big were the curves with Cremona label 11a1 and 11a2, of conductor 11.

Among all curves in our dataset, the smallest height we found was the point 
\begin{equation}\label{eq:SmallestHeightPt}
(27, -119, 1) \quad \text{on the elliptic curve} \quad y^2 + xy + y = x^3 + x^2 - 2990x + 71147,    
\end{equation}
 which has Cremona label 1470l1, and height $0.0099641079999\hdots$.

We note that the point in \eqref{eq:SmallestHeightPt} was also found by Elkies \cite{Elkies}, although his normalization of the height makes his value half of ours. At the same time, Taylor found points of much smaller height on elliptic curves defined over quadratic fields \cite{TaylorTables} in unpublished work. Our methods, however, differ from both of these previous computations, in that we search broadly over elliptic curves by conductor, whereas these prior computations were targeted searches in families of elliptic curves likely to contain points of small height.


We also make some observations about the quadratic fields over which points of smallest height are defined. In our dataset, the point of smallest height that we found was defined over $\Q$ for 2,199 of the elliptic curves. The next most common fields were the two cyclotomic quadratic fields: $\Q(\sqrt{-3})$ with 1,610 elliptic curves and $\Q(\sqrt{-4})$ with 1,191 elliptic curves. This remained consistent when restricting to curves where our point is provably the smallest over any quadratic field: in this case, the most common field was $\Q(\sqrt{-3})$ with 20 curves, followed by $\Q(\sqrt{-4})$ with 14, and then $\Q$ with 11. Finally, we note that among curves where we have provably found the point of smallest height over all quadratic fields, this point always agrees with the point found in our initial search. Thus, we suspect that for many of the remaining curves, the point in our dataset is in fact the smallest over all quadratic fields.

\subsection{Remarks on Conjectures \ref{conj:Lehmer} and \ref{conj:Lang}}

We conducted some preliminary investigations into Conjectures \ref{conj:Lehmer} and \ref{conj:Lang} using the data we collected. The resulting charts can be found at \href{https://github.com/EliOrvis/LehmersConjectureForECs}{https://github.com/EliOrvis/LehmersConjectureForECs}. Unfortunately, we did not find a discernible relationship between the smallest height point in our dataset and either the conductor or the discriminant of the elliptic curve.

\clearpage
\subsection*{Acknowledgements}
We would like to thank our mentors, Nicole Looper and Shiva Chidambaram, for their guidance throughout the project. We would also like to thank Joseph Silverman, for his mentorship, and Andrew Sutherland, for his computational insights. We thank John Voight for referring us to the article \cite{DoyleKrumm}. We would like to thank the organizers Serin Hong, Hang Xue, Alina Bucur, Renee Bell, Brandon Levin, Anthony Várilly-Alvarado, Isabel Vogt and David Zureick-Brown of the 2024 Arizona Winter School on Abelian Varieties. Finally, we would like to thank the National Science Foundation and the Clay Mathematics Institute, and the anonymous referees for their helpful feedback.

\bibliographystyle{plain}
\bibliography{refs_consistent}

\end{document}